\newtheorem{theorem}{Theorem}[section]
\newtheorem{definition}[theorem]{Definition}
\newtheorem{lemma}[theorem]{Lemma}
\newtheorem{corollary}[theorem]{Corollary}
\newtheorem{example}[theorem]{Example}
\newtheorem{remark}[theorem]{Remark}
\newproof{proof}{\it Proof}
\numberwithin{equation}{section}
\begin{document}

\begin{frontmatter}



\title{Cameron--Storvick theorem   associated with Gaussian paths on function space}

\author{Jae Gil Choi}
\ead{jgchoi@dankook.ac.kr}

\address{School of General Education,  
                 Dankook University,
                 Cheonan 31116, 
                 Republic of Korea}

 
\begin{abstract}
The purpose of this paper is to provide a more general Cameron--Storvick theorem
for the generalized analytic Feynman integral associated with  Gaussian process  
$\mathcal Z_k$ on a very general Wiener space $C_{a,b}[0,T]$.
The general Wiener space  $C_{a,b}[0,T]$ can be considered as the set of all
continuous sample paths of the  generalized Brownian motion process determined  
by continuous functions $a(t)$ and $b(t)$ on $[0,T]$.
As an interesting application, we apply this theorem
to evaluate the generalized analytic Feynman integral of certain monomials
in terms of Paley--Wiener--Zygmund stochastic integrals.
\end{abstract}

\begin{keyword}
Cameron--Storvick theorem \sep 
generalized analytic Feynman integral \sep
Gaussian process \sep
generalized Brownian motion process \sep
Paley--Wiener--Zygmund stochastic integral. 

\vspace{.3cm}
\MSC[2010] Primary   46G12 \sep 60G15;  Secondary   28C20 \sep 46B09    

\end{keyword}

\end{frontmatter}


  
\setcounter{equation}{0}
\section{Introduction}\label{sec:intro}

\par 
Let $(C_0[0,T],\mathcal{W},\mathfrak{m})$ denote  the  classical  Wiener space,
where $C_0[0,T]$ is  the set of  all  $\mathbb R$-valued continuous functions $x$ on $[0,T]$ with $x(0)=0$, 
$\mathcal{W}$ denotes the complete $\sigma$-field of all Wiener measurable subsets of 
$C_0[0,T]$, and  $\mathfrak{m}$ denotes  the  Wiener measure characterized by 
\[
\mathfrak{m}(\{x:x(t)\le \tau \})=\frac{1}{\sqrt{2\pi t}}\int_{-\infty}^{\tau}\exp\bigg[-\frac{u^2}{2t}\bigg]du.
\]
Using the Kolmogorov's extension theorem (for instance, see \cite{Kuo06,Yeh73}),
the Wiener space $C_0[0,T]$ can be illustrated as the set of all (continuous) sample paths 
of the standard Brownian motion process (SBMP).
In \cite{Cameron51}, Cameron provided an integration by parts formula for functionals on 
the classical Wiener space $C_0[0,T]$. 
In \cite{CS91}, Cameron and Storvick developed the  parts formula
for the analytic Feynman integral of functionals on $C_0[0,T]$. 
They also applied their result to establish the evaluation formula for  the analytic Feynman 
integral of unbounded functionals on $C_0[0,T]$.
The  parts formula on $C_0[0,T]$  introduced in \cite{Cameron51} also
was developed in \cite{PS98-PanAmer,PSS98-RCMP} to establish  various 
parts formulas for the analytic Feynman integral. 
The parts formula for the analytic Feynman integral 
is now called the Cameron--Storvick theorem.

\par
On the other hand, the concept of the generalized Wiener integral  and the generalized analytic 
Feynman integral on  $C_0[0,T]$ were introduced in \cite{CPS93}, and further 
developed and used in \cite{CC17,PS91,PS95}. In \cite{CC17,CPS93,PS91,PS95}, the generalized Wiener 
integral  was defined by the Wiener integral
\[
\int_{C_0[0,T]}F (\mathcal{Z}_h(x,\cdot) )d \mathfrak{m}(x),
\]
where $\mathcal{Z}_h(x,\cdot)$ is a Gaussian path defined by the Paley--Wiener--Zygmund  (PWZ)  stochastic 
integral \cite{PWZ33,PS88} as follows:
\[
\mathcal{Z}_h(x,t) =\int_0^th(s)dx(s) \mbox{ with  } h\in L_2[0,T].
\]
 
\par
The parts formula on the function space $C_{a,b}[0,T]$,
which is a generalization of the Cameron--Storvick  theorem
was provided by Chang and Skoug in \cite{CS03} and further developed in \cite{CCS03}.
The function space $C_{a,b}[0,T]$ can be considered as the set of continuous sample paths 
of the  generalized Brownian motion process (GBMP) determined  by continuous functions 
$a(t)$ and $b(t)$ on $[0,T]$.
A GBMP on a probability space $(\Omega,\mathcal F, P)$ and a time interval $[0,T]$ 
is a Gaussian  process  $Y \equiv\{Y_t\}_{t\in [0,T]}$ such that $Y_0=0$  almost
surely,  and for any cylinder set $I_{t_1,\ldots,t_n,B}$ having the form
\[
I_{t_1,\ldots,t_n,B}
=\big\{\omega\in \Omega:(Y(t_1,\omega),\cdots,Y(t_n,\omega))\in B \big\}
\]
with a set of time moments $0= t_0 < t_1< \cdots<t_n \le T$ and a Borel set $B\subset \mathbb R^n$,
the measure $P(I_{t_1,\ldots,t_n,B})$ of $I_{t_1,\ldots,t_n,B}$ 
is equal to
\[
\begin{aligned}
&\bigg( (2\pi )^n \prod\limits_{j=1}^n\big(b(t_j)-b(t_{j-1})\big) \bigg)^{-1/2} \\
&\quad \times\int_{B}
\exp \bigg[- \frac 12 \sum\limits_{j=1}^n
   \frac {[(u_j-a(t_j))-(u_{j-1}-a(t_{j-1}))]^2}
         {b(t_j)-b(t_{j-1})} \bigg]
du_1\cdots du_n
\end{aligned}
\]
where $u_0=0$, $a(t)$ is a continuous real-valued function on  $[0,T]$,
and $b(t)$ is an increasing continuous real-valued function on $[0,T]$.
  For more details, see \cite{Yeh71,Yeh73}. Note that choosing $a(t)\equiv 0$ and $b(t)=t$ on $[0,T]$, 
one can see that the GBMP reduces a SBMP (or, Wiener process).

\par  
The aim of this paper is to provide a more general Cameron--Storvick 
theorem for the generalized analytic Feynman integral associated with  Gaussian 
paths   on the function space $C_{a,b}[0,T]$.
As an application, we apply our general Cameron--Storvick  theorem
to evaluate the generalized analytic Feynman integral of 
certain monomials in terms of PWZ  stochastic integrals.

\par 
In order to present our assertions, we assume that 
$a(t)$ is an absolutely continuous real-valued  function on $[0,T]$ 
such that  $a(0)=0$,  $a'(t)\in L^2[0,T]$, and 
\begin{equation}\label{eq:new-cc2}
\int_0^T |a'(t)|^2 d|a|(t)< +\infty,
\end{equation}
where $|a|(\cdot)$ denotes  the   total variation   function of the function $a(\cdot)$,
and  $b(t)$ is an increasing, continuously differentiable
real-valued function with $b(0)=0 $ and $b'(t) >0$ for each $t \in [0,T]$. 
We also assume familiarity with \cite{CCS03,CS03} and adopt 
the notation and terminologies of those papers. 
The basic concepts and definitions of the function space 
$(C_{a,b}[0,T],\mathcal W(C_{a,b}[0,T]),\mu)$,
which forms a complete probability space,  
the concept of the scale-invariant measurability on $C_{a,b}[0,T]$, 
the Cameron--Martin space $C_{a,b}'[0,T]$ and the PWZ 
stochastic integral on $C_{a,b}[0,T]$ may also be found in \cite{CC19,CCK15}.
In particular, we refer to the reference \cite{CCS19} 
for the definition and the properties of the Gaussian processes $\mathcal Z_k$
used in this paper. 
However, in order to propose our assertions in this paper, we shall
introduce  the following terminologies:

(i) The Hilbert space: Let
\[
L_{a,b}^2[0,T]
=\bigg\{v :\int_{0}^{T} v^2(s)db(s)<+\infty  \hbox{ and }
           \int_{0}^{T} v^2(s)d|a|(s) <+\infty \bigg\}.
\]
Then $L_{a,b}^2[0,T]$ is a separable Hilbert space with the inner product given 
by
\[
(u,v)_{a,b}=\int_0^T u(t)v(t)dm_{|a|,b}(t)\equiv \int_0^T u(t)v(t)d[b(t)+|a|(t)],
\]
where $m_{|a|,b}$ denotes  the Lebesgue--Stieltjes measure induced by $|a|(\cdot)$
and $b(\cdot)$.

(ii) The Cameron--Martin space in $C_{a,b}[0,T]$:
Let
\[
C_{a,b}'[0,T]
 =\bigg\{ w \in C_{a,b}[0,T] : w(t)=\int_0^t z(s) d b(s)
\hbox{  for some   } z \in L_{a,b}^2[0,T]  \bigg\}.
\]
Then $C_{a,b}' \equiv C_{a,b}'[0,T]$ with the inner product
\[
(w_1, w_2)_{C_{a,b}'}
=\int_0^T  Dw_1(t)  Dw_2(t)  d b(t)
\]
is a separable  Hilbert space,
where the (homeomorhic) operator 
$D: C_{a,b}'[0,T] \to L_{a,b}^2[0,T]$ is given by 
\begin{equation}\label{eq:Dt}
Dw(t)= z(t)=\frac{w'(t)}{b'(t)}.
\end{equation}

(iii) The PWZ stochastic integral:
Let $\{e_n\}_{n=1}^{\infty}$ be a complete orthonormal set in 
$(C_{a,b}'[0,T], \|\cdot\|_{C_{a,b}'})$
 such that the  $De_n$'s are  of bounded variation on $[0,T]$. 
Then for   $w\in C_{a,b}'[0,T]$  and   $x\in C_{a,b}[0,T]$, we define 
the PWZ stochastic integral $(w,x)^{\sim}$ 
as follows:
\[
(w,x)^{\sim} 
=\lim\limits_{n\to\infty}\int_0^T\sum_{j=1}^n(w,e_j)_{C_{a,b}'}De_j(t)dx(t)
\]
if the limit exists. 
For each $w\in C_{a,b}'[0,T]$, the PWZ stochastic integral $(w,x)^{\sim}$  
exists  for  a.e. $x\in C_{a,b}[0,T]$.

\setcounter{equation}{0}
\section{Gaussian processes on $C_{a,b}[0,T]$}\label{sec:GP}

\par
In order to present our Cameron--Storvick theorem on the function space $C_{a,b}[0,T]$, we
follow the exposition of   \cite{CC19,CCK15,CCS19}. 

\par
Let $C_{a,b}^*[0,T]$ be the set of  functions $k$ in $C_{a,b}'[0,T]$ such that $Dk$
is continuous except for a finite number of finite jump discontinuities and is of
bounded variation on $[0,T]$. For any $w\in C_{a,b}'[0,T]$ and $k\in C_{a,b}^*[0,T]$,
let the operation $\odot$ between $C_{a,b}'[0,T]$ and $C_{a,b}^*[0,T]$ be defined by
\[
w\odot k =D^{-1}(DwDk), \,\mbox{ i.e., } D(w\odot k)=DwDk,
\]
where $DwDk$ denotes the pointwise multiplication of the functions $Dw$
and $Dk$.  
Then $(C_{a,b}^*[0,T],\odot)$ is a commutative algebra  with the identity $b$. 

\par 
For each $t\in[0,T]$, let 
$\Phi_t(\tau)=D^{-1}\chi_{[0,t]}(\tau)=\int_0^{\tau}\chi_{[0,t]}(u)db(u)$, 
$\tau\in [0,T]$, and for  $k\in C_{a,b}'[0,T]$  with $Dk\ne 0$ $m_L$-a.e. 
on $[0,T]$ ($m_L$ denotes the Lebesgue  measure on $[0,T]$),  let  $\mathcal Z_{k}(x,t)$  
be the PWZ  stochastic integral 
\begin{equation}\label{eq:g-process}
\mathcal Z_{k}(x,t)=(k\odot  \Phi_t ,x)^{\sim}.
\end{equation}
Let
$\gamma_k(t)=\int_0^t Dk(u)da(u)$ 
and let
$\beta_k(t)=\int_0^t(Dk(u))^2db(u)$.
Then the stochastic process $\mathcal{Z}_k: C_{a,b}[0,T]\times [0,T]\to\mathbb R$ 
is Gaussian with mean function
\[
\int_{C_{a,b}[0,T]}\mathcal Z_k(x,t)d\mu(x)=\int_0^t h(u)da(u)=\gamma_k(t)
\]
and covariance function
\[
\begin{aligned}
&\int_{C_{a,b}[0,T]}\big(\mathcal Z_k(x,s)-\gamma_k(s)\big)
\big(\mathcal Z_k(x,t)-\gamma_k(t)\big)d\mu(x)\\
&=\int_0^{\min\{s,t }\{Dk(u)\}^2(u)db(u)=\beta_k(\min\{s,t\}).
\end{aligned}
\]
In addition, by \cite[Theorem 21.1]{Yeh73}, $\mathcal Z_k(\cdot,t)$ is stochastically
continuous in $t$ on $[0,T]$. If $Dk$ is of bounded variation on $[0,T]$, then, for
all $x\in C_{a,b}[0,T]$, $\mathcal Z_{k}(x,t)$ is continuous in $t$. Of course if
$k(t)\equiv b(t)$, then $\mathcal Z_b(x,t)=x(t)$, the continuous sample paths of the 
GBMP $Y$, which  consist the function space $C_{a,b}[0,T]$. 
Furthermore, if $a(t)\equiv0$ and $b(t)=t$ on $[0,T]$, then 
the function space $C_{a,b}[0,T]$   reduces to the 
classical Wiener space $C_0[0,T]$ and the Gaussian process \eqref{eq:g-process} 
with $k(t)\equiv t$ is a standard Brownian motion process.

\par
Given  any $w\in C_{a,b}'[0,T]$ and $k \in C_{a,b}^*[0,T]$, 
it follows that 
\begin{equation}\label{eq:Z-bsic-p}
(w,\mathcal{Z}_k(x,\cdot))^{\sim}
 =(w\odot k,x)^{\sim}
\end{equation}
for $\mu$-a.e $x\in C_{a,b}[0,T]$.

\par
In order to establish our Cameron--Storvick theorem for 
functionals on $C_{a,b}[0,T]$, we define a class  $\mathrm{Supp}_{C_{a,b}^*}[0,T]$ 
as follows:
\[
\mathrm{Supp}_{C_{a,b}^*}[0,T] 
=\{k\in C_{a,b}^*[0,T]:  Dk\ne 0\,\,\,\,  {m}_{L}\mbox{-a.e. on } [0,T]\}.  
\]

\begin{remark}
(i)  The  space $(\mathrm{Supp}_{C_{a,b}^*}[0,T],\odot)$ forms a monoid. 
The variance function $b(\cdot)$ of the GBMP $Y$ 
is the identity in the space $(\mathrm{Supp}_{C_{a,b}^*}[0,T]  ,  \odot)$.

(ii) Given a function $k$ in $\mathrm{Supp}_{C_{a,b}^*}[0,T]$, the process
$\mathcal Z_k$ on $C_{a,b}[0,T]\times [0,T]$
is the GBMP determined by the functions $\gamma_k$ and $\beta_k$.
\end{remark}
 
\par
For any $k\in\mathrm{Supp}_{C_{a,b}^*}[0,T]$, the   Lebesgue--Stieltjes integrals
\[
\|w\odot k\|_{C_{a,b}'}^2=\int_0^T (Dw(t))^2(Dk(t))^2db(t),
\]
and
\[
(w\odot k, a)_{C_{a,b}'} =\int_0^T Dw(t)Dk(t)Da(t)db(t)=\int_0^T Dw(t)Dk(t)da(t)
\]
exist for all $w\in C_{a,b}'[0,T]$.
Throughout the remainder of this paper,
we thus require $k$ to be in $\mathrm{Supp}_{C_{a,b}^*}[0,T]$ for the process $\mathcal Z_k$.

\setcounter{equation}{0}
\section{Parts formula for functionals in Gaussian paths}

\par
In  \cite{Cameron51}, Cameron derived an integration by parts formula for 
functionals on the Wiener space $C_0[0,T]$. 
The parts formula  involved the first  variation (a kind of G\^ateaux derivative) 
of functionals on $C_0[0,T]$. 
In this section we establish an integration by parts formula for functionals 
in Gaussian paths on the function space $C_{a,b}[0,T]$. To do this 
we first provide the definition of the first variation of  functionals on the 
function space $C_{a,b}[0,T]$.


\begin{definition}
Let $F$ be a ${\mathcal{W}}(C_{a,b}[0,T])$-measurable functional 
on $C_{a,b}[0,T]$ and let $w \in C_{a,b}[0,T]$. Then
given two functions $k_1$ and $k_2$ in $C_{a,b}[0,T]$, 
\begin{equation}\label{eq:1st}
\delta_{k_1,k_2} F(x|w)
\equiv \delta  F(\mathcal Z_{k_1}(x,\cdot)|\mathcal Z_{k_2}(w,\cdot))
=\frac{\partial}{\partial\alpha} 
F(\mathcal Z_{k_1}(x,\cdot)+\alpha \mathcal Z_{k_2}(w,\cdot)) \bigg|_{\alpha=0}
\end{equation}
(if it exists) is called the first variation of $F$ in the direction $w$.
\end{definition}

\begin{remark} \label{re:2020oct}
Setting $k_1=k_2 \equiv b$ on $[0,T]$, our definition of the first variation reduces
to the first variation studied in \cite{CCS03,CS03}. That is,
\[
\delta_{b,b} F(x|w)=\delta F(x|w).
\]
\end{remark}

\par
Let $\mathcal Z_k$  be the  Gaussian process given by \eqref{eq:g-process} 
on $C_{a,b}[0,T]\times[0,T]$. We define the  $\mathcal Z_k$-function space
integral (namely, the function space integral associated with  the Gaussian 
paths $\mathcal Z_k(x,\cdot)$) for functionals $F$ on $C_{a,b}[0,T]$ by the 
formula
\[
E_{x}[F(\mathcal Z_k(x,\cdot))]
=\int_{C_{a,b}[0,T]}F (\mathcal Z_k(x,\cdot))d\mu(x)
\]
whenever the integral exists.

\par
In order to establish an integration  by parts formula 
for the function space integral associated with Gaussian paths on $C_{a,b}[0,T]$,
we need a translation theorem for the function space integral.
The following translation theorem is due to Chang and Choi \cite{CC19}.

\begin{theorem}\label{lemma:translation-Z}
Let  $k_1$ be a  function in $\mathrm{Supp}_{C_{a,b}^*}[0,T]$  and
let $F$ be a functional on $C_{a,b}[0,T]$ such that $F(\mathcal Z_{k_1}(x,\cdot))$ 
 is $\mu$-integrable over $C_{a,b}[0,T]$.
Then for any $\theta \in C_{a,b}'[0,T]$ and  $k_2 \in \mathrm{Supp}_{C_{a,b}^*}[0,T]$,
\begin{equation}\label{eq:tt001-v2}
\begin{aligned}
&E_{x}\big[F(\mathcal{Z}_{k_1}(x,\cdot)+\mathcal{Z}_{k_2}(\theta\odot k_1,\cdot))\big]\\
&=\exp\bigg[ -\frac{1}{2}\|\theta \odot k_2\|_{C_{a,b}'}^2
-(\theta \odot k_2,a)_{C_{a,b}'}\bigg]\\
& \quad \times
E_{x}\Big[ F(\mathcal{Z}_{k_1}(x,\cdot))
\exp\big[(\theta,\mathcal{Z}_{k_2}(x,\cdot))^{\sim}\big]\Big].
\end{aligned}
\end{equation}
\end{theorem}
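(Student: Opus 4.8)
The plan is to reduce the statement to the classical Cameron–Martin translation theorem on $C_{a,b}[0,T]$ by exploiting the basic identity \eqref{eq:Z-bsic-p} for the Gaussian paths. First I would observe that, since $k_2\in\mathrm{Supp}_{C_{a,b}^*}[0,T]$ and $\theta\in C_{a,b}'[0,T]$, the element $\theta\odot k_2$ lies in $C_{a,b}'[0,T]$, so it is an admissible translation direction on the function space. The strategy is to apply the standard translation (Cameron–Martin) theorem for the function space integral on $C_{a,b}[0,T]$ to the functional $G(x)\equiv F(\mathcal Z_{k_1}(x,\cdot))$ with translation path $\theta\odot k_2$, which produces a Radon–Nikodym factor of the form $\exp\big[-(\theta\odot k_2,x)^{\sim}-\tfrac12\|\theta\odot k_2\|_{C_{a,b}'}^2-(\theta\odot k_2,a)_{C_{a,b}'}\big]$ times $G(x+(\theta\odot k_2))$, or the analogous identity written with the translation on the other side; I would use the version stated in \cite{CC19}.

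The key step is then to identify the translated argument $\mathcal Z_{k_1}(x+(\theta\odot k_2),\cdot)$. By the definition \eqref{eq:g-process}, $\mathcal Z_{k_1}(x,t)=(k_1\odot\Phi_t,x)^{\sim}$, and the PWZ stochastic integral is (almost everywhere) additive and linear in its second argument, so $\mathcal Z_{k_1}(x+(\theta\odot k_2),t)=\mathcal Z_{k_1}(x,t)+(k_1\odot\Phi_t,\theta\odot k_2)^{\sim}$. The second term is a PWZ integral of a Cameron–Martin element against a fixed Cameron–Martin path, hence it equals the inner product $(k_1\odot\Phi_t,\theta\odot k_2)_{C_{a,b}'}$; using the definition of $\odot$ and of $D$ together with $\Phi_t=D^{-1}\chi_{[0,t]}$, this inner product simplifies, by commutativity and associativity of $\odot$ on the algebra $(C_{a,b}^*[0,T],\odot)$, to $(\theta\odot\Phi_t,k_1\odot k_2)_{C_{a,b}'}$, and after reassembling it one recognizes it as $\mathcal Z_{k_2}(\theta\odot k_1,t)$ via \eqref{eq:Z-bsic-p}. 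So the translated argument is exactly $\mathcal Z_{k_1}(x,\cdot)+\mathcal Z_{k_2}(\theta\odot k_1,\cdot)$, which is the left‑hand side of \eqref{eq:tt001-v2}. Similarly, the linear term $(\theta\odot k_2,x)^{\sim}$ in the exponent is rewritten, again by \eqref{eq:Z-bsic-p}, as $(\theta,\mathcal Z_{k_2}(x,\cdot))^{\sim}$, producing the exponential factor on the right‑hand side; the remaining two terms $-\tfrac12\|\theta\odot k_2\|_{C_{a,b}'}^2-(\theta\odot k_2,a)_{C_{a,b}'}$ are exactly the deterministic constants displayed, and their finiteness is guaranteed by the integrability remarks at the end of Section~2.

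The main obstacle I anticipate is the careful handling of the almost‑everywhere identities for the PWZ stochastic integral: additivity in the second argument, the reduction of $(w,\mathcal Z_k(x,\cdot))^\sim=(w\odot k,x)^\sim$ to the inner‑product form when one factor is a fixed Cameron–Martin path, and the interchange of the $n\to\infty$ limit defining $(\cdot,\cdot)^\sim$ with the translation — each of these holds only for $\mu$-a.e. $x$, so I would need to verify that the exceptional null sets can be chosen uniformly (or at least countably, hence harmlessly) so that the chain of equalities is valid $\mu$-almost everywhere, and that $G(x)=F(\mathcal Z_{k_1}(x,\cdot))$ inherits the measurability and integrability needed to apply the translation theorem of \cite{CC19}. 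Once these measure‑theoretic bookkeeping issues are settled, the computation of the constants and the two exponential rewritings are routine, and the theorem follows.
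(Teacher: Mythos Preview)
The paper does not actually prove this theorem: it is stated with the attribution ``The following translation theorem is due to Chang and Choi \cite{CC19}'' and no argument is given. So there is no in-paper proof to compare against.

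That said, your plan is sound and is almost certainly the argument in \cite{CC19}. The reduction to the ordinary Cameron--Martin translation theorem on $C_{a,b}[0,T]$ with translation direction $w_0=\theta\odot k_2\in C_{a,b}'[0,T]$, applied to $G(x)=F(\mathcal Z_{k_1}(x,\cdot))$, is exactly the right move. Your key identification
\[
\mathcal Z_{k_1}(x+\theta\odot k_2,t)
=(k_1\odot\Phi_t,x)^{\sim}+(k_1\odot\Phi_t,\theta\odot k_2)_{C_{a,b}'}
=\mathcal Z_{k_1}(x,t)+\mathcal Z_{k_2}(\theta\odot k_1,t)
\]
is correct: both second terms equal $\int_0^t Dk_1(s)Dk_2(s)D\theta(s)\,db(s)$ by direct computation using $D\Phi_t=\chi_{[0,t]}$ and the definition of $\odot$. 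Likewise $(\theta\odot k_2,x)^{\sim}=(\theta,\mathcal Z_{k_2}(x,\cdot))^{\sim}$ is precisely \eqref{eq:Z-bsic-p}. One small point: in your first display of the Radon--Nikodym factor you wrote $-(\theta\odot k_2,x)^{\sim}$; the sign should be $+$ in the version that matches \eqref{eq:tt001-v2} (translating $x\mapsto x+w_0$ on the left gives the factor $\exp[(w_0,x)^{\sim}-\tfrac12\|w_0\|_{C_{a,b}'}^2-(w_0,a)_{C_{a,b}'}]$ on the right), but you already flagged that you would use the correct version from \cite{CC19}. The measure-theoretic caveats you raise (a.e.\ additivity of the PWZ integral, uniform null sets in $t$) are genuine but standard, and are handled by the bounded-variation hypothesis on $Dk_1,Dk_2$ built into $\mathrm{Supp}_{C_{a,b}^*}[0,T]$, which makes $\mathcal Z_{k_j}(x,\cdot)$ a continuous path for every $x$ and reduces the relevant PWZ integrals to Riemann--Stieltjes integrals.
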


\par
We are now ready to present our integration by parts formula for functionals
in Gaussian paths on $C_{a,b}[0,T]$.

\begin{theorem}\label{byparts-step1}
Let  $k_1$ and $k_2$ be functions in $\mathrm{Supp}_{C_{a,b}^*}[0,T]$,  
let $\theta$ be a function in $C_{a,b}'[0,T]$,
and let $F$ be a functional on $C_{a,b}[0,T]$ such that
$F(\mathcal Z_{k_1}(x,\cdot))$ is $\mu$-integrable over $C_{a,b}[0,T]$.
Furthermore assume that  
\begin{equation}\label{step1-condition}
E_{x}\big[\big|\delta  F(\mathcal Z_{k_1}(x,\cdot)
|\mathcal Z_{k_2}(\theta\odot k_1,\cdot))\big|\big]<+\infty.
\end{equation}
Then 
\begin{equation}\label{eq:byparts-step1-show}
\begin{aligned}
&E_{x}\big[\delta  F(\mathcal Z_{k_1}(x,\cdot)|\mathcal Z_{k_2}(\theta\odot k_1,\cdot))\big]\\
&=E_{x}\big[(\theta,\mathcal Z_{k_2}(x,\cdot))^{\sim} F(\mathcal Z_{k_1}(x,\cdot))\big]
-(\theta\odot k_2,a)_{C_{a,b}'}E_{x}\big[F(\mathcal Z_{k_1}(x,\cdot))\big].
\end{aligned}
\end{equation}
\end{theorem}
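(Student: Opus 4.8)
The plan is to derive the integration by parts formula by differentiating the translation formula of Theorem \ref{lemma:translation-Z} with respect to a scalar parameter and then evaluating at zero. First I would replace $\theta$ in \eqref{eq:tt001-v2} by $\alpha\theta$ for a real parameter $\alpha$; since $\alpha\theta \in C_{a,b}'[0,T]$ whenever $\theta \in C_{a,b}'[0,T]$, and $\mathcal Z_{k_2}(\alpha\theta\odot k_1,\cdot) = \alpha\,\mathcal Z_{k_2}(\theta\odot k_1,\cdot)$ by linearity of the PWZ stochastic integral in its first argument, the left-hand side becomes
\[
E_{x}\big[F(\mathcal Z_{k_1}(x,\cdot) + \alpha\,\mathcal Z_{k_2}(\theta\odot k_1,\cdot))\big],
\]
while the right-hand side becomes
\[
\exp\bigg[-\frac{\alpha^2}{2}\|\theta\odot k_2\|_{C_{a,b}'}^2 - \alpha(\theta\odot k_2,a)_{C_{a,b}'}\bigg]
E_{x}\Big[F(\mathcal Z_{k_1}(x,\cdot))\exp\big[\alpha(\theta,\mathcal Z_{k_2}(x,\cdot))^{\sim}\big]\Big].
\]
(Here I use that $\|\alpha\theta\odot k_2\|_{C_{a,b}'}^2 = \alpha^2\|\theta\odot k_2\|_{C_{a,b}'}^2$ and $(\alpha\theta\odot k_2,a)_{C_{a,b}'} = \alpha(\theta\odot k_2,a)_{C_{a,b}'}$, both immediate from the bilinearity established in Section \ref{sec:GP}.)

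Next I would differentiate both sides of this identity with respect to $\alpha$ and set $\alpha = 0$. On the left, $\frac{\partial}{\partial\alpha}\big|_{\alpha=0}$ of the integrand is by definition \eqref{eq:1st} the first variation $\delta F(\mathcal Z_{k_1}(x,\cdot)|\mathcal Z_{k_2}(\theta\odot k_1,\cdot))$; passing the derivative inside the integral is justified by the hypothesis \eqref{step1-condition} together with a dominated-convergence / mean value theorem argument. On the right, applying the product rule to the exponential prefactor and the expectation, the quadratic term $-\alpha^2/2\,\|\theta\odot k_2\|_{C_{a,b}'}^2$ contributes nothing at $\alpha=0$, the prefactor itself equals $1$ at $\alpha=0$, the derivative of the prefactor contributes $-(\theta\odot k_2,a)_{C_{a,b}'}E_x[F(\mathcal Z_{k_1}(x,\cdot))]$, and the derivative of $E_x[F(\mathcal Z_{k_1}(x,\cdot))\exp[\alpha(\theta,\mathcal Z_{k_2}(x,\cdot))^{\sim}]]$ at $\alpha=0$ is $E_x[(\theta,\mathcal Z_{k_2}(x,\cdot))^{\sim}F(\mathcal Z_{k_1}(x,\cdot))]$. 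Combining these yields exactly \eqref{eq:byparts-step1-show}.

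The main obstacle is justifying the two interchanges of differentiation and integration (expectation) — one on each side of the parametrized translation identity. For the left-hand side one needs a uniform-in-$\alpha$ (locally near $0$) integrable dominating function for the difference quotients $\alpha^{-1}[F(\mathcal Z_{k_1}(x,\cdot)+\alpha\mathcal Z_{k_2}(\theta\odot k_1,\cdot)) - F(\mathcal Z_{k_1}(x,\cdot))]$; the natural route is to write this difference quotient via the mean value theorem as $\delta F(\mathcal Z_{k_1}(x,\cdot)+\alpha^*\mathcal Z_{k_2}(\theta\odot k_1,\cdot)|\mathcal Z_{k_2}(\theta\odot k_1,\cdot))$ for some intermediate $\alpha^*$, and then control this using \eqref{step1-condition} after re-expressing the shifted variation through the translation theorem applied once more (so that the domination reduces to $\mu$-integrability of $|\delta F(\mathcal Z_{k_1}(x,\cdot)|\mathcal Z_{k_2}(\theta\odot k_1,\cdot))|$ times an exponential factor with finite moments). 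For the right-hand side the interchange is easier: the function $\alpha \mapsto e^{\alpha(\theta,\mathcal Z_{k_2}(x,\cdot))^{\sim}}$ is smooth in $\alpha$ with derivative bounded locally by $|(\theta,\mathcal Z_{k_2}(x,\cdot))^{\sim}|e^{|\alpha|\,|(\theta,\mathcal Z_{k_2}(x,\cdot))^{\sim}|}$, and since $(\theta,\mathcal Z_{k_2}(x,\cdot))^{\sim} = (\theta\odot k_2,x)^{\sim}$ by \eqref{eq:Z-bsic-p} is Gaussian under $\mu$, it has exponential moments of all orders, so $F(\mathcal Z_{k_1}(x,\cdot))$ being merely $\mu$-integrable, combined with Hölder's inequality, suffices. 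I would therefore organize the proof so that both interchanges are reduced to these Gaussian-moment estimates and the single hypothesis \eqref{step1-condition}, invoking Theorem \ref{lemma:translation-Z} as the sole structural input.
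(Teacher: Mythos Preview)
Your proposal is correct and follows essentially the same route as the paper: parametrize the translation identity \eqref{eq:tt001-v2} by $\alpha\theta$, differentiate at $\alpha=0$, and read off \eqref{eq:byparts-step1-show}, with the interchange of $\partial_\alpha$ and $E_x$ justified via \eqref{step1-condition}. The paper is terser---it cites Folland's Theorem~2.27 for the interchange and does not separately discuss the second interchange on the right-hand side---so your more detailed accounting of both swaps is a reasonable expansion, though note that your H\"older argument for the right-hand side would need $F(\mathcal Z_{k_1}(x,\cdot))\in L^p$ for some $p>1$ rather than mere $\mu$-integrability.
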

\begin{proof}
By using  \eqref{eq:1st}  and \eqref{eq:tt001-v2}, it follows that
\begin{equation}\label{setp1-evlu}
\begin{aligned}
&E_{x}\big[\delta  F(\mathcal Z_{k_1}(x,\cdot)|\mathcal Z_{k_2}(\theta\odot k_1,\cdot))\big]\\
&=E_{x}\bigg[ \frac{\partial}{\partial \alpha}F(\mathcal Z_{k_1}(x,\cdot)
+\alpha\mathcal Z_{k_2}(\theta\odot k_1,\cdot)) \bigg |_{\alpha=0} \bigg]\\
&=\frac{\partial}{\partial \alpha}\Big(E_{x}\big[
 F(\mathcal Z_{k_1}(x,\cdot)+\mathcal Z_{\alpha k_2}(\theta\odot k_1,\cdot)) \big]\Big) 
\bigg |_{\alpha=0}\\
&=\frac{\partial}{\partial \alpha}
\bigg ( \exp\bigg[ -\frac{\alpha^2}{2}\|\theta \odot k_2\|_{C_{a,b}'} 
-\alpha(\theta\odot k_2,a)_{C_{a,b}'}\bigg]\\
&\qquad\qquad\times
E_{x}\Big[F(\mathcal Z_{k_1}(x,\cdot))
\exp\big[ \alpha(\theta,\mathcal Z_{k_2}(x,\cdot))^{\sim}\big]
\Big] \bigg ) \bigg|_{\alpha=0}\\
&=E_{x}\big[(\theta,\mathcal Z_{k_2}(x,\cdot))^{\sim} F(\mathcal Z_{k_1}(x,\cdot))\big] 
-(\theta\odot k_2,a)_{C_{a,b}'}E_{x}\big[F(\mathcal Z_{k_1}(x,\cdot))\big].
\end{aligned}
\end{equation}
The second equality of  \eqref{setp1-evlu}  
follows from \eqref{step1-condition} and Theorem 2.27 in \cite{folland}.
\qed\end{proof}

\setcounter{equation}{0}
\section{Cameron--Storvick  theorem for  the  generalized analytic 
Feynman integral associated with Gaussian paths}\label{sec:translation-g}
 
\par
In this section, we establish the Cameron--Storvick theorem for the generalized 
analytic Feynman integral of functionals $F$ on 
the function space $C_{a,b}[0,T]$. We begin this section with the definition of 
the generalized analytic Feynman integral associated with Gaussian 
process $\mathcal Z_k$ ($\mathcal Z_k$-Feynman integral)   on $C_{a,b}[0,T]$.
 
\par
Throughout the remainder of this paper, let  $\mathbb C_+$ and $\mathbb{\widetilde C}_+$
denote the set of   complex numbers with positive real part,  and  non-zero
complex numbers with nonnegative real part, respectively. For each $\lambda \in \mathbb C$,
$\lambda^{1/2}$ denotes the principal square root of $\lambda$; i.e., $\lambda^{1/2}$ is
always chosen to have nonnegative real part, so that  $\lambda^{-1/2}=(\lambda^{-1})^{1/2}$
is  in $\mathbb C_+$ for all $\lambda \in \widetilde{\mathbb C}_+$.

\begin{definition} \label{def:Faynman}
Given a function $k\in \mathrm{Supp}_{C_{a,b}^*}[0,T]$,
let $\mathcal Z_k$  be the  Gaussian process given by \eqref{eq:g-process} 
and let $F$ be a $\mathbb C$-valued scale-invariant measurable 
functional on $C_{a,b}[0,T]$ such that the generalized $\mathcal Z_k$-function 
space  integral (namely, the function space integral associated with the Gaussian 
paths $\mathcal Z_k(x,\cdot)$) 
\[
J_F(\mathcal Z_k;\lambda)
=  E_{x} [F (\lambda^{-1/2}\mathcal Z_k(x,\cdot))]
\]
exists and is finite for all $\lambda>0$. If there exists  a 
function $J_F^*(\mathcal Z_k;\lambda)$ analytic on $\mathbb C_+$    such that 
$J_F^*(\mathcal Z_k;\lambda)=J_F(\mathcal Z_k;\lambda)$  for all $\lambda\in (0,+\infty)$, 
then $J_F^*(\mathcal Z_k;\lambda)$ is defined to be the  analytic 
$\mathcal Z_k$-function space  integral (namely,  the analytic function space 
integral associated with the Gaussian paths 
$\mathcal Z_k(x,\cdot)$) of $F$  over $C_{a,b}[0,T]$ 
with parameter $\lambda$, and for $\lambda\in\mathbb C_+$   we write
\[
E_{x}^{\mathrm{an}_{\lambda}}[F(\mathcal Z_k(x,\cdot))]
\equiv \int_{C_{a,b}[0,T]}^{\mathrm{an}_{\lambda}}F(\mathcal Z_k(x,\cdot))d\mu(x)
 = J_F^*(\mathcal Z_k;\lambda).
\]

\par
Let $q$ be a non-zero real number and 
let $F$ be a  scale-invariant measurable  functional whose
analytic $\mathcal Z_k$-function space integral, 
$E_{x}^{\mathrm{an}_{\lambda}}[F(\mathcal Z_k(x,\cdot))]$, 
exists for  all $\lambda$ in $\mathbb C_+$. If the following limit exists, 
we call it the generalized  analytic $\mathcal Z_k$-Feynman  
integral  of $F$ with parameter $q$, and we write
\begin{equation}\label{eq:Feynman-add400}
E_{x}^{\mathrm{anf}_{q}}[F(\mathcal Z_k(x,\cdot))]
\equiv \int_{C_{a,b}[0,T]}^{\mathrm{anf}_{q}}F(\mathcal Z_k(x,\cdot))d\mu(x) 
=\lim\limits_{\substack{\lambda\to -iq \\  \lambda\in \mathbb C_+}}
E_{x}^{\mathrm{an}_{\lambda}}[F(\mathcal Z_k(x,\cdot))].
\end{equation}
\end{definition}

\par
We are now ready to establish a Cameron--Storvick type theorem for 
our generalized analytic Feynman integral.
It will be helpful to establish the following  lemma  before giving 
the main theorem.

\begin{lemma}\label{byparts-step2}
Let $k_1$, $k_2$, $\theta$,  and $F$ be as in Theorem \ref{byparts-step1}.
For each $\rho>0$, assume that $F(\rho \mathcal Z_{k_1}(x,\cdot))$ is $\mu$-integrable 
over $C_{a,b}[0,T]$. 
Furthermore assume that for each $\rho>0$,
\[
E_{x}\big[\big|\delta  F(\rho \mathcal Z_{k_1}(x,\cdot)
|\rho\mathcal Z_{k_2}(\theta\odot k_1,\cdot))\big|\big]<+\infty.
\]
Then 
\begin{equation}\label{eq:byparts-step2-show}
\begin{aligned}
&E_{x}\big[\delta  F(\rho \mathcal Z_{k_1}(x,\cdot)
|\rho\mathcal Z_{k_2}(\theta\odot k_1,\cdot))\big]\\
&=E_{x}\big[(\theta,\mathcal Z_{k_2}(x,\cdot))^{\sim} 
F(\rho \mathcal Z_{k_1}(x,\cdot))\big]
-(\theta\odot k_2,a)_{C_{a,b}'}
E_{x}\big[F(\rho \mathcal Z_{k_1}(x,\cdot))\big].
\end{aligned}
\end{equation}
\end{lemma}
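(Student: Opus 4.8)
The plan is to reduce Lemma~\ref{byparts-step2} to Theorem~\ref{byparts-step1} by a rescaling argument. The key observation is that the hypotheses of Theorem~\ref{byparts-step1} involve an \emph{arbitrary} functional $F$ satisfying the two integrability conditions, so for each fixed $\rho>0$ I may simply apply that theorem to the functional $G \equiv G_\rho$ defined by $G(y) = F(\rho\, y)$ for $y \in C_{a,b}[0,T]$. First I would check that $G_\rho$ is again $\mathcal W(C_{a,b}[0,T])$-measurable (this is immediate since $y \mapsto \rho y$ is continuous, hence Borel, on $C_{a,b}[0,T]$) and that $G_\rho(\mathcal Z_{k_1}(x,\cdot)) = F(\rho\,\mathcal Z_{k_1}(x,\cdot))$ is $\mu$-integrable over $C_{a,b}[0,T]$, which is exactly the standing hypothesis of the lemma.

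The heart of the argument is the chain-rule identity for the first variation under this rescaling, namely
\[
\delta G_\rho\big(\mathcal Z_{k_1}(x,\cdot)\,\big|\,\mathcal Z_{k_2}(\theta\odot k_1,\cdot)\big)
= \rho\,\delta F\big(\rho\,\mathcal Z_{k_1}(x,\cdot)\,\big|\,\mathcal Z_{k_2}(\theta\odot k_1,\cdot)\big)
= \delta F\big(\rho\,\mathcal Z_{k_1}(x,\cdot)\,\big|\,\rho\,\mathcal Z_{k_2}(\theta\odot k_1,\cdot)\big),
\]
where the first equality comes straight from the definition \eqref{eq:1st}: differentiating $\alpha \mapsto G_\rho\big(\mathcal Z_{k_1}(x,\cdot)+\alpha\,\mathcal Z_{k_2}(\theta\odot k_1,\cdot)\big) = F\big(\rho\,\mathcal Z_{k_1}(x,\cdot)+\alpha\rho\,\mathcal Z_{k_2}(\theta\odot k_1,\cdot)\big)$ at $\alpha=0$ and using the linearity of $y \mapsto \rho y$ together with the substitution $\alpha \mapsto \alpha/\rho$ inside the derivative; the second equality is the obvious homogeneity of the direction slot. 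Combining these, the finiteness hypothesis of the lemma, $E_x[|\delta F(\rho\,\mathcal Z_{k_1}(x,\cdot)\,|\,\rho\,\mathcal Z_{k_2}(\theta\odot k_1,\cdot))|]<+\infty$, translates verbatim into condition \eqref{step1-condition} for $G_\rho$.

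With $G_\rho$ now satisfying all the hypotheses of Theorem~\ref{byparts-step1}, I would invoke the conclusion \eqref{eq:byparts-step1-show} for $G_\rho$, which reads
\[
E_x\big[\delta G_\rho(\mathcal Z_{k_1}(x,\cdot)\,|\,\mathcal Z_{k_2}(\theta\odot k_1,\cdot))\big]
= E_x\big[(\theta,\mathcal Z_{k_2}(x,\cdot))^{\sim}\,G_\rho(\mathcal Z_{k_1}(x,\cdot))\big]
- (\theta\odot k_2,a)_{C_{a,b}'}\,E_x\big[G_\rho(\mathcal Z_{k_1}(x,\cdot))\big].
\]
Substituting $G_\rho(\mathcal Z_{k_1}(x,\cdot)) = F(\rho\,\mathcal Z_{k_1}(x,\cdot))$ on the right and the chain-rule identity $\delta G_\rho(\mathcal Z_{k_1}(x,\cdot)\,|\,\mathcal Z_{k_2}(\theta\odot k_1,\cdot)) = \delta F(\rho\,\mathcal Z_{k_1}(x,\cdot)\,|\,\rho\,\mathcal Z_{k_2}(\theta\odot k_1,\cdot))$ on the left yields precisely \eqref{eq:byparts-step2-show}, completing the proof.

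I do not expect a genuine obstacle here; the only point requiring a little care is the justification of the first equality in the chain-rule identity, i.e.\ that pulling the constant $\rho$ out of the direction and absorbing it into the derivation parameter is legitimate. This is a one-line consequence of the definition of the first variation as a one-sided (or two-sided) partial derivative in $\alpha$, using that $F$ is evaluated along the affine path $t \mapsto \rho\,\mathcal Z_{k_1}(x,\cdot) + t\,\rho\,\mathcal Z_{k_2}(\theta\odot k_1,\cdot)$ and that reparametrizing $t = \alpha\rho$ with $\rho>0$ fixed does not affect existence of the derivative at the origin. Everything else is bookkeeping.
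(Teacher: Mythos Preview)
Your proposal is correct and follows essentially the same approach as the paper: define $G(y)=F(\rho y)$, verify that $\delta G(\mathcal Z_{k_1}(x,\cdot)\mid\mathcal Z_{k_2}(\theta\odot k_1,\cdot))=\delta F(\rho\mathcal Z_{k_1}(x,\cdot)\mid\rho\mathcal Z_{k_2}(\theta\odot k_1,\cdot))$ from the definition of the first variation, and then apply Theorem~\ref{byparts-step1} to $G$. Your write-up is in fact slightly more explicit than the paper's about the chain-rule step and the verification of hypotheses, but the argument is the same.
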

\begin{proof}
Let $G(x)=F(\rho x)$. Then 
\[
G(\mathcal Z_{k_1}(x,\cdot)+\alpha \mathcal Z_{k_2}(w,\cdot))
=F(\rho  \mathcal Z_{k_1}(x,\cdot)+\rho \alpha \mathcal Z_{k_2}(w,\cdot))
\] 
and 
\[
\frac{\partial}{\partial \alpha}
G(\mathcal Z_{k_1}(x,\cdot)+\alpha \mathcal Z_{k_2}(w,\cdot))\bigg|_{\alpha=0}
=\frac{\partial}{\partial \alpha}
F(\rho  \mathcal Z_{k_1}(x,\cdot)+\rho \alpha \mathcal Z_{k_2}(w,\cdot)) \bigg|_{\alpha=0}.
\]
Thus  $\delta  G(\mathcal Z_{k_1}(x,\cdot)|\mathcal Z_{k_2}(\theta\odot k_1,\cdot))
=\delta  F(\rho \mathcal Z_{k_1}(x,\cdot)|\rho\mathcal Z_{k_2}(\theta\odot k_1,\cdot))$. 
Hence by  equation \eqref{eq:byparts-step1-show} 
with $F$ replaced with $G$, we have
\[
\begin{aligned}
&E_{x}\big[\delta F(\rho \mathcal Z_{k_1}(x,\cdot)
|\rho\mathcal Z_{k_2}(\theta\odot k_1,\cdot))\big]\\
&=E_{x}\big[\delta G(\mathcal Z_{k_1}(x,\cdot)|\mathcal Z_{k_2}(\theta\odot k_1,\cdot))\big]\\
&=E_{x}\big[(\theta,\mathcal Z_{k_2}(x,\cdot))^{\sim} G(\mathcal Z_{k_1}(x,\cdot))\big] 
-(\theta\odot k_2,a)_{C_{a,b}'}E_{x}\big[G(\mathcal Z_{k_1}(x,\cdot))\big]\\
&=E_{x}\big[(\theta,\mathcal Z_{k_2}(x,\cdot))^{\sim} F(\rho \mathcal Z_{k_1}(x,\cdot))\big]
-(\theta\odot k_2,a)_{C_{a,b}'}E_{x}\big[F(\rho\mathcal Z_{k_1}(x,\cdot))\big]
\end{aligned}
\]
which establishes \eqref{eq:byparts-step2-show}.
\qed\end{proof}

\par
Next we provide the Cameron--Storvick theorem for the generalized analytic
$\mathcal Z_k$-Feynman integral on  the function space $C_{a,b}[0,T]$.

\begin{theorem}\label{thm:CS-real}
Let $k_1$, $k_2$,   $\theta$,  and $F$ be as in Lemma \ref{byparts-step2}.
Then if any two of the three generalized analytic Feynman integrals in the 
following equation exist, then the third one also exists, and equality holds:
\[
\begin{aligned}
&E_{x}^{\mathrm{anf}_q}\big[\delta F( \mathcal Z_{k_1} (x,\cdot)
| \mathcal Z_{k_2}(\theta\odot k_1,\cdot))\big]\\
&=-iq E_{x}^{\mathrm{anf}_q}\big[
(\theta,\mathcal Z_{k_2}(x,\cdot))^{\sim} F( \mathcal Z_{k_1}(x,\cdot))\big]\\
&\quad
-(-iq)^{1/2}(\theta\odot k_2,a)_{C_{a,b}'}
E_{x}^{\mathrm{anf}_q}\big[F( \mathcal Z_{k_1}(x,\cdot))\big].
\end{aligned}
\]
\end{theorem}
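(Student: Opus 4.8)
The plan is to derive the Feynman-integral identity from its ``real-$\rho$'' counterpart, Lemma \ref{byparts-step2}, by the standard analytic-continuation plus Feynman-limit argument. First I would fix $\lambda > 0$ and apply Lemma \ref{byparts-step2} with $\rho = \lambda^{-1/2}$, replacing $\theta$ by $\lambda^{-1/2}\theta$ (so that the translation direction in the first variation scales correctly). This yields, for every $\lambda > 0$,
\[
\begin{aligned}
&E_{x}\big[\delta F(\lambda^{-1/2}\mathcal Z_{k_1}(x,\cdot)\mid \lambda^{-1/2}\mathcal Z_{k_2}(\theta\odot k_1,\cdot))\big]\\
&=\lambda^{-1/2}E_{x}\big[(\theta,\mathcal Z_{k_2}(x,\cdot))^{\sim}F(\lambda^{-1/2}\mathcal Z_{k_1}(x,\cdot))\big]
-\lambda^{-1/2}(\theta\odot k_2,a)_{C_{a,b}'}E_{x}\big[F(\lambda^{-1/2}\mathcal Z_{k_1}(x,\cdot))\big].
\end{aligned}
\]
Here I use that $\delta F(\rho x\mid \rho w)$ is homogeneous of degree one in the scaling, so that scaling both arguments by $\lambda^{-1/2}$ and then pulling a factor out of the direction reproduces the stated coefficients; the coefficient $\lambda^{-1/2}$ on the right is exactly what, at $\lambda = -iq$, becomes $(-iq)^{-1/2}$, and after multiplying through (see below) produces the $-iq$ and $(-iq)^{1/2}$ factors in the claim.

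Next I would interpret the three terms above as the values at $\lambda > 0$ of the functions $J_{\delta F}(\mathcal Z_{k_1};\lambda)$, $J_{G_\theta}(\mathcal Z_{k_1};\lambda)$ with $G_\theta(y) = (\theta, y)^{\sim}\cdot F(y)$ appropriately interpreted through $\mathcal Z_{k_2}$, and $J_F(\mathcal Z_{k_1};\lambda)$, using Definition \ref{def:Faynman}. The hypothesis that two of the three analytic Feynman integrals exist means that two of these three functions admit analytic extensions to $\mathbb C_+$; since the displayed identity is a linear relation among the three functions valid on the ray $(0,\infty)$, the third analytic extension exists on $\mathbb C_+$ as well, and the identity persists on all of $\mathbb C_+$ by the identity theorem for analytic functions (two analytic functions on the connected open set $\mathbb C_+$ agreeing on $(0,\infty)$ coincide). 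Concretely, after clearing the $\lambda^{-1/2}$ one rewrites the relation as
\[
\lambda^{1/2}E_{x}^{\mathrm{an}_\lambda}\big[\delta F(\mathcal Z_{k_1}(x,\cdot)\mid \mathcal Z_{k_2}(\theta\odot k_1,\cdot))\big]
= E_{x}^{\mathrm{an}_\lambda}\big[(\theta,\mathcal Z_{k_2}(x,\cdot))^{\sim}F(\mathcal Z_{k_1}(x,\cdot))\big]
-(\theta\odot k_2,a)_{C_{a,b}'}E_{x}^{\mathrm{an}_\lambda}\big[F(\mathcal Z_{k_1}(x,\cdot))\big],
\]
valid for all $\lambda \in \mathbb C_+$.

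Finally I would take the Feynman limit $\lambda \to -iq$, $\lambda \in \mathbb C_+$, in this last display, invoking Definition \ref{def:Faynman} (equation \eqref{eq:Feynman-add400}): whenever two of the three limits exist the third does too, and $\lambda^{1/2}\to (-iq)^{1/2}$. Multiplying the resulting identity through by $(-iq)^{-1/2}$, using $(-iq)^{-1/2}\cdot(-iq)^{-1/2}$ contributing nothing extra and $(-iq)^{-1/2}\cdot 1$ reorganizing into $-iq$ on the first right-hand term via $((-iq)^{-1/2})^{-1} = (-iq)^{1/2}$ — more precisely, dividing by $(-iq)^{1/2}$ gives a $(-iq)^{-1}$ on the left which is then moved to the right — I obtain exactly the asserted formula with coefficients $-iq$ and $(-iq)^{1/2}$. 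The main obstacle is bookkeeping: one must check that the scaling substitution $\rho = \lambda^{-1/2}$, $\theta \mapsto \lambda^{-1/2}\theta$ in Lemma \ref{byparts-step2} really produces the coefficient $\lambda^{-1/2}$ (and not, say, $\lambda^{-1}$ or $1$) in front of both right-hand terms, since the term $(\theta\odot k_2,a)_{C_{a,b}'}$ is linear in $\theta$ while $(\theta,\mathcal Z_{k_2}(x,\cdot))^{\sim}$ is also linear in $\theta$ — so both pick up a single factor $\lambda^{-1/2}$, which is the consistency one needs for the clean final form. Once that is verified, the analyticity and limit steps are routine.
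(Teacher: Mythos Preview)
Your overall strategy—apply Lemma~\ref{byparts-step2} with a $\lambda$-dependent scaling, interpret the result as a relation among $J(\cdot\,;\lambda)$'s, analytically continue on $\mathbb C_+$, and pass to the Feynman limit—is exactly the paper's approach. The problem is in the bookkeeping, and the errors are not merely cosmetic: they propagate to wrong coefficients at the end.

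First, your substitution goes the wrong way. With $\rho=\lambda^{-1/2}$ and $\theta\mapsto\lambda^{-1/2}\theta$, the direction in Lemma~\ref{byparts-step2} becomes
\[
\rho\,\mathcal Z_{k_2}\big((\lambda^{-1/2}\theta)\odot k_1,\cdot\big)
=\lambda^{-1/2}\cdot\lambda^{-1/2}\,\mathcal Z_{k_2}(\theta\odot k_1,\cdot)
=\lambda^{-1}\,\mathcal Z_{k_2}(\theta\odot k_1,\cdot),
\]
not $\lambda^{-1/2}\mathcal Z_{k_2}(\theta\odot k_1,\cdot)$ as you wrote. The paper instead takes $\theta_\rho=\theta/\rho=\lambda^{1/2}\theta$, so that $\rho\,\mathcal Z_{k_2}(\theta_\rho\odot k_1,\cdot)=\mathcal Z_{k_2}(\theta\odot k_1,\cdot)$ with \emph{no} residual $\lambda$-factor in the direction; this is what one needs, since in $E_x^{\mathrm{anf}_q}[\delta F(\mathcal Z_{k_1}(x,\cdot)\mid\mathcal Z_{k_2}(\theta\odot k_1,\cdot))]$ only the $x$-dependent argument is scaled.

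Second, you do not account for the fact that in the middle term $E_x^{\mathrm{anf}_q}\big[(\theta,\mathcal Z_{k_2}(x,\cdot))^{\sim}F(\mathcal Z_{k_1}(x,\cdot))\big]$ the factor $\mathcal Z_{k_2}(x,\cdot)$ is also $x$-dependent and hence must also carry the scaling $\lambda^{-1/2}$ at the $J(\lambda)$ stage. The paper handles this by writing $(\theta_\rho,\mathcal Z_{k_2}(x,\cdot))^{\sim}=\rho^{-2}(\theta,\rho\,\mathcal Z_{k_2}(x,\cdot))^{\sim}$, which produces the coefficient $\rho^{-2}=\lambda$ in front of the correctly scaled integrand. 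Combining both points, the correct real-$\lambda$ identity is
\[
E_x\big[\delta F(\lambda^{-1/2}\mathcal Z_{k_1}(x,\cdot)\mid\mathcal Z_{k_2}(\theta\odot k_1,\cdot))\big]
=\lambda\,E_x\big[(\theta,\lambda^{-1/2}\mathcal Z_{k_2}(x,\cdot))^{\sim}F(\lambda^{-1/2}\mathcal Z_{k_1}(x,\cdot))\big]
-\lambda^{1/2}(\theta\odot k_2,a)_{C_{a,b}'}E_x\big[F(\lambda^{-1/2}\mathcal Z_{k_1}(x,\cdot))\big],
\]
with coefficients $\lambda$ and $\lambda^{1/2}$, which pass directly to $-iq$ and $(-iq)^{1/2}$ in the limit. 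Your displayed relation with $\lambda^{1/2}$ on the left and no $\lambda$-factors on the right would instead yield $(-iq)^{-1/2}$ in both places, and the manipulations in your final paragraph cannot recover the correct powers (indeed ``$(-iq)^{-1/2}\cdot(-iq)^{-1/2}$ contributing nothing extra'' is simply false). Fix the substitution to $\theta\mapsto\lambda^{1/2}\theta$ and insert the $\lambda^{-1/2}$ into $\mathcal Z_{k_2}(x,\cdot)$, and the rest of your argument goes through verbatim.
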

\begin{proof}
Given  $\rho>0$  and $\theta \in C_{a,b}'[0,T]$, let $\theta_{\rho}=\frac{1}{\rho} \theta$.
Then $\theta_{\rho}$ is a function  in $C_{a,b}'[0,T]$, and 
$\theta \odot k_1= \rho \theta_{\rho}\odot k_1$. 
By  equation \eqref{eq:byparts-step2-show} with $\theta$ replaced with $\theta_{\rho}$, 
\begin{equation}\label{setp3-evlu}
\begin{aligned}
&E_{x}\big[\delta  F (\rho \mathcal Z_{k_1}(x,\cdot) 
|\mathcal Z_{k_2}(\theta \odot k_1,\cdot))\big]\\
&=E_{x}\big[ \delta  F(\rho \mathcal Z_{k_1}(x,\cdot) 
|\rho \mathcal Z_{k_2}(\theta_{\rho} \odot k_1,\cdot)) \big]\\
&=E_{x}\big[( \theta_{\rho},\mathcal Z_{k_2 }(x,\cdot))^{\sim} 
F(\rho \mathcal Z_{k_1}(x,\cdot)) \big] 
 -( \theta_{\rho}\odot k_2 ,a)_{C_{a,b}'}E_{x}\big[F(\rho\mathcal Z_{k_1}(x,\cdot))\big] \\
&=\rho^{-2}E_{x}\big[(\theta,\rho\mathcal Z_{k_2 }(x,\cdot))^{\sim}
 F(\rho \mathcal Z_{k_1}(x,\cdot))\big]\\
&\quad
-\rho^{-1}( \theta\odot k_2 ,a)_{C_{a,b}'}E_{x}\big[F(\rho\mathcal Z_{k_1}(x,\cdot))\big].
\end{aligned}
\end{equation}
Now let $\rho=\lambda^{-1/2}$. Then equation \eqref{setp3-evlu} becomes
\begin{equation}\label{setp3-evlu2}
\begin{aligned}
&E_{x}\big[ \delta  
F(\lambda^{-1/2} \mathcal Z_{k_1}(x,\cdot) |\mathcal Z_{k_2}(w,\cdot))\big]\\
&=\lambda E_{x}\big[ (\theta,\lambda^{-1/2}\mathcal Z_{k_2 }(x,\cdot))^{\sim} 
F(\lambda^{-1/2} \mathcal Z_{k_1}(x,\cdot))\big]\\
&\quad
-\lambda^{1/2}( \theta\odot k_2 ,a)_{C_{a,b}'}
E_{x}\big[ F(\lambda^{-1/2}\mathcal Z_{k_1}(x,\cdot))\big].
\end{aligned}
\end{equation}
Since $\rho>0$ was arbitrary, we have that equation \eqref{setp3-evlu2} holds 
for all $\lambda>0$. We now use Definition \ref{def:Faynman} to obtain our desired 
conclusions.
\qed\end{proof}

\begin{corollary} 
Under the assumptions as given in Theorem \ref{thm:CS-real},
it follows that 
if any two of the three generalized analytic Feynman integrals 
in the following equation exist, then the third one also exists, 
and equality holds:
\begin{equation}\label{eq:byparts-step3-show-ex}
\begin{aligned}
&E_{x}^{\mathrm{anf}_q}\big[(\theta,\mathcal Z_{k_2}(x,\cdot))^{\sim} 
F( \mathcal Z_{k_1}(x,\cdot))\big]\\
&=\frac{i}{q}E_{x}^{\mathrm{anf}_q}\big[
\delta F( \mathcal Z_{k_1} (x,\cdot)| \mathcal Z_{k_2}(\theta\odot k_1,\cdot))\big]\\
&\quad+(-iq)^{-1/2}
(\theta\odot k_2,a)_{C_{a,b}'}
E_{x}^{\mathrm{anf}_q}\big[F( \mathcal Z_{k_1}(x,\cdot))\big].
\end{aligned}
\end{equation}
\end{corollary}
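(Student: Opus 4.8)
The plan is to obtain the Corollary as a direct algebraic rearrangement of the identity already established in Theorem \ref{thm:CS-real}. Starting from
\[
E_{x}^{\mathrm{anf}_q}\big[\delta F( \mathcal Z_{k_1} (x,\cdot)| \mathcal Z_{k_2}(\theta\odot k_1,\cdot))\big]
=-iq\, E_{x}^{\mathrm{anf}_q}\big[(\theta,\mathcal Z_{k_2}(x,\cdot))^{\sim} F( \mathcal Z_{k_1}(x,\cdot))\big]
-(-iq)^{1/2}(\theta\odot k_2,a)_{C_{a,b}'}E_{x}^{\mathrm{anf}_q}\big[F( \mathcal Z_{k_1}(x,\cdot))\big],
\]
I would solve for the term $E_{x}^{\mathrm{anf}_q}[(\theta,\mathcal Z_{k_2}(x,\cdot))^{\sim} F( \mathcal Z_{k_1}(x,\cdot))]$ by dividing through by $-iq$. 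Since $q$ is a nonzero real number, $-iq$ is a nonzero complex number, so the division is legitimate; one uses $(-iq)^{-1}=i/q$ and $(-iq)^{1/2}/(-iq)=(-iq)^{-1/2}$ (valid because $-iq\in\widetilde{\mathbb C}_+$, so the principal-branch conventions fixed at the start of Section \ref{sec:translation-g} apply, i.e. $\lambda^{-1/2}=(\lambda^{-1})^{1/2}$). This yields exactly \eqref{eq:byparts-step3-show-ex}.

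The only substantive point beyond the arithmetic is the ``any two of three implies the third'' bookkeeping. Here the three analytic Feynman integrals are the same three that appear in Theorem \ref{thm:CS-real}, merely permuted: the Corollary's left-hand integral is the second integral of the Theorem, and the two integrals on the Corollary's right-hand side are the first and third integrals of the Theorem. Because the relation between the three is a nondegenerate linear one (all three coefficients $1$, $-iq$, $-(-iq)^{1/2}(\theta\odot k_2,a)_{C_{a,b}'}$ on the Theorem side are nonzero — in particular $-iq\neq 0$), existence of any two of the three forces convergence of the defining limit \eqref{eq:Feynman-add400} for the third, and the equality is preserved in the limit. So the hypothesis ``any two of the three exist'' is literally the same hypothesis in both statements, and the conclusion transfers verbatim.

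I expect no real obstacle: this is a restatement of Theorem \ref{thm:CS-real} obtained by isolating a different term, and it is included for the convenience of the application in the next section (the monomial evaluation). The one place to be careful is to cite the branch conventions when rewriting $(-iq)^{1/2}/(-iq)$ as $(-iq)^{-1/2}$, so that the constant multiplying $E_{x}^{\mathrm{anf}_q}[F(\mathcal Z_{k_1}(x,\cdot))]$ is unambiguously the principal value; everything else is immediate.

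\begin{proof}
This is immediate from Theorem \ref{thm:CS-real}. By the hypotheses on $k_1$, $k_2$, $\theta$ and $F$, equation in Theorem \ref{thm:CS-real} holds:
\[
\begin{aligned}
&E_{x}^{\mathrm{anf}_q}\big[\delta F( \mathcal Z_{k_1} (x,\cdot)| \mathcal Z_{k_2}(\theta\odot k_1,\cdot))\big]\\
&=-iq\, E_{x}^{\mathrm{anf}_q}\big[(\theta,\mathcal Z_{k_2}(x,\cdot))^{\sim} F( \mathcal Z_{k_1}(x,\cdot))\big]
-(-iq)^{1/2}(\theta\odot k_2,a)_{C_{a,b}'}E_{x}^{\mathrm{anf}_q}\big[F( \mathcal Z_{k_1}(x,\cdot))\big].
\end{aligned}
\]
Since $q$ is a nonzero real number, $-iq$ is a nonzero complex number with $-iq\in\widetilde{\mathbb C}_+$; dividing the identity above by $-iq$ and using $(-iq)^{-1}=i/q$ together with $(-iq)^{-1}(-iq)^{1/2}=(-iq)^{-1/2}$ gives
\[
\begin{aligned}
&E_{x}^{\mathrm{anf}_q}\big[(\theta,\mathcal Z_{k_2}(x,\cdot))^{\sim} F( \mathcal Z_{k_1}(x,\cdot))\big]\\
&=\frac{i}{q}E_{x}^{\mathrm{anf}_q}\big[\delta F( \mathcal Z_{k_1} (x,\cdot)| \mathcal Z_{k_2}(\theta\odot k_1,\cdot))\big]
+(-iq)^{-1/2}(\theta\odot k_2,a)_{C_{a,b}'}E_{x}^{\mathrm{anf}_q}\big[F( \mathcal Z_{k_1}(x,\cdot))\big].
\end{aligned}
\]
The three analytic Feynman integrals occurring here are exactly the three occurring in Theorem \ref{thm:CS-real}; since the linear relation between them has nonzero coefficients, the existence of any two of them implies the existence of the third, and the equality holds.
\qed\end{proof}
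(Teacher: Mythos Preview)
Your proposal is correct and matches the paper's intent: the paper gives no explicit proof of this corollary, treating it as an immediate algebraic rearrangement of Theorem~\ref{thm:CS-real}, which is precisely what you do. One small imprecision: your claim that all three coefficients in the linear relation are nonzero need not hold (the factor $(\theta\odot k_2,a)_{C_{a,b}'}$ can vanish), but this is harmless since, as you already observe, the three integrals in the corollary are exactly the three in Theorem~\ref{thm:CS-real}, so the ``any two of three'' conclusion transfers verbatim regardless.
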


\begin{remark}\label{remark-old}
As commented  in Section \ref{sec:GP} above,
if $k\equiv b$ on $[0,T]$, then  $\mathcal Z_b(x,t)=x(t)$ for 
each $x\in C_{a,b}[0,T]$. In this case the generalized analytic 
$\mathcal Z_b$-Feynman integral $E_{x}^{\mathrm{anf}_q}[F(\mathcal Z_b(x,\cdot))]$ 
agrees  with the previous definition  of the generalized analytic Feynman 
integral $E_x^{\mathrm{anf}_q}[F(x)]$, see  \cite{CCS03,CS03}. 
\end{remark}
\par
In view of Remarks \ref{re:2020oct} and  \ref{remark-old}, we have the following corollary.

\begin{corollary}[\cite{CCS03}]
Let $\theta$ be a function in $C_{a,b}'[0,T]$,
and let $F$ be a functional on $C_{a,b}[0,T]$ such that
for each $\rho>0$, $F(\rho x)$ is $\mu$-integrable over $C_{a,b}[0,T]$. 
Furthermore assume that for each $\rho>0$, 
\[
E_{x}\big[\big|\delta  F(\rho \mathcal Z_b(x,\cdot) |\rho\mathcal Z_b(\theta\odot b,\cdot))\big|\big] 
\equiv  E_{x}\big[\big|\delta  F(\rho x |\rho \theta)\big|\big]<+\infty.
\]
Then if any two of the three generalized analytic Feynman integrals 
in the following equation exist, 
then the third one also exists, and equality holds:
\[
E_{x}^{\mathrm{anf}_q}[\delta F( x| \theta)] 
=-iq E_{x}^{\mathrm{anf}_q}[(\theta,x)^{\sim} F(x)]
-(-iq)^{1/2}(\theta,a)_{C_{a,b}'}E_{x}^{\mathrm{anf}_q}[F(x)].
\]
\end{corollary}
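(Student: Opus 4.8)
The plan is to obtain this corollary as the special case of Theorem \ref{thm:CS-real} corresponding to the choice $k_1 = k_2 \equiv b$ on $[0,T]$. The first thing I would check is that $b$ is an admissible choice, i.e.\ that $b \in \mathrm{Supp}_{C_{a,b}^*}[0,T]$. This is immediate: since $b(t) = \int_0^t 1\,db(s)$, we have $Db \equiv 1$ on $[0,T]$, which is continuous, of bounded variation, and nowhere zero; hence $b \in C_{a,b}^*[0,T]$ with $Db \neq 0$ everywhere, so the standing hypothesis on the process $\mathcal Z_k$ is met with $k = b$.

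Next I would record the simplifications that take place when $k_1 = k_2 = b$. As noted in Section \ref{sec:GP}, $b$ is the identity of the monoid $(\mathrm{Supp}_{C_{a,b}^*}[0,T], \odot)$, so $\theta \odot b = \theta$ for every $\theta \in C_{a,b}'[0,T]$; moreover $\mathcal Z_b(x,t) = x(t)$ for all $x \in C_{a,b}[0,T]$, and likewise $\mathcal Z_b(\theta,\cdot) = \theta$ since $\theta \in C_{a,b}'[0,T] \subseteq C_{a,b}[0,T]$. Consequently $\mathcal Z_{k_2}(\theta \odot k_1,\cdot) = \mathcal Z_b(\theta,\cdot) = \theta$, $(\theta,\mathcal Z_{k_2}(x,\cdot))^{\sim} = (\theta,x)^{\sim}$, and $(\theta \odot k_2, a)_{C_{a,b}'} = (\theta, a)_{C_{a,b}'}$. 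Finally, by Remark \ref{re:2020oct} the first variation collapses, $\delta F(\mathcal Z_b(x,\cdot)\,|\,\mathcal Z_b(w,\cdot)) = \delta_{b,b}F(x|w) = \delta F(x|w)$, so the hypotheses assumed here — $\mu$-integrability of $F(\rho x) = F(\rho \mathcal Z_b(x,\cdot))$ and finiteness of $E_x[|\delta F(\rho x\,|\,\rho\theta)|]$ for each $\rho > 0$ — are exactly the hypotheses of Lemma \ref{byparts-step2}, hence of Theorem \ref{thm:CS-real}, in the case $k_1 = k_2 = b$.

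With these identifications in place, I would simply invoke Theorem \ref{thm:CS-real} with $k_1 = k_2 = b$, rewrite each of the three terms using the simplifications above, and use Remark \ref{remark-old} to replace $E_x^{\mathrm{anf}_q}[F(\mathcal Z_b(x,\cdot))]$ by $E_x^{\mathrm{anf}_q}[F(x)]$; this produces both the asserted equivalence of existence of any two of the three generalized analytic Feynman integrals and the displayed identity. I do not expect a substantive obstacle: the whole argument is the verification that $b$ meets the standing assumption $k \in \mathrm{Supp}_{C_{a,b}^*}[0,T]$ together with the bookkeeping showing that the $\odot$-identity property of $b$ and the equality $\mathcal Z_b(x,\cdot) = x$ reduce the general formula of Theorem \ref{thm:CS-real} to the stated special one, so no new estimates or analytic-continuation arguments are required.
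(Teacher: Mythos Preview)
Your proposal is correct and follows exactly the approach the paper intends: the corollary is stated immediately after the sentence ``In view of Remarks \ref{re:2020oct} and \ref{remark-old}, we have the following corollary,'' so the paper's own argument is precisely the specialization $k_1=k_2=b$ in Theorem \ref{thm:CS-real} together with the identifications $\mathcal Z_b(x,\cdot)=x$, $\theta\odot b=\theta$, and $\delta_{b,b}F(x|w)=\delta F(x|w)$ that you spell out.
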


\par
The formulas and results in this paper are more complicated than the
corresponding formulas and results in \cite{Cameron51,CS91,PS98-PanAmer,PSS98-RCMP}
because the Gaussian process used in this paper is neither   centered nor stationary in time.
However, by choosing $a(t)\equiv 0$ and $b(t)=t$ on $[0,T]$, the function space $C_{a,b}[0,T]$
reduces to the Wiener space $C_0[0,T]$, and so the
expected results on  $C_0[0,T]$ are immediate corollaries of the results in this paper.

\setcounter{equation}{0}
\section{Generalized analytic Feynman integral of  
monomials  in terms of PWZ stochastic integrals}

\par
When we evaluate the following generalized analytic Feynman integral
\begin{equation}\label{eq:exam2020}
E_x^{\mathrm{anf}_q}\bigg[\prod_{j=1}^{m } (\theta\odot k_j,x)^{\sim}\bigg]
\end{equation}
we might not be able to use the change of variables theorem of the usual measure theory,
because the set of Gaussian random variables $(\theta\odot k_j,x)^{\sim}$, $j=1,\ldots,m$,
are generally not independent.
In this case, to  apply  the change of variables theorem  for the calculation 
of \eqref{eq:exam2020}, we might apply the Gram--Schmidt process for the set of 
functions $\{\theta\odot k_1,  \ldots,\theta\odot k_m\}$.
 
\par
Using equation \eqref{eq:byparts-step3-show-ex}, we indeed see that  
the generalized Feynman integral of  functionals having the 
form \eqref{eq:exam2020} can be calculated very explicitly. In this section  
we present interesting  examples to which equation \eqref{eq:byparts-step3-show-ex}
can be applied.

\begin{example}\label{example01}
Let  $k_1$ and $k_2$ be functions in $\mathrm{Supp}_{C_{a,b}^*}[0,T]$,
and given a function $\theta$  in $C_{a,b}'[0,T]$,  set $F(x)=(\theta,x)^{\sim}$ 
for $x\in C_{a,b}[0,T]$. Then using equations \eqref{eq:1st} 
and \eqref{eq:Z-bsic-p}, it follows that   for  any $w$  in $C_{a,b}'[0,T]$,
\[
\begin{aligned}
\delta  F(\mathcal Z_{k_1}(x,\cdot)|\mathcal Z_{k_2}(w,\cdot))
&=\frac{\partial}{\partial\alpha}  \big\{(\theta, \mathcal Z_{k_1}(x,\cdot))^{\sim}
+\alpha (\theta,\mathcal Z_{k_2}(w,\cdot))^{\sim}\big\} \bigg|_{\alpha=0}\\
&=(\theta,\mathcal Z_{k_2}(w,\cdot))^{\sim} \\
&=(\theta\odot k_2,w)^{\sim} \\
&=(\theta\odot k_2,w)_{C_{a,b}'} .
\end{aligned}
\]
From this, we see that 
\begin{equation}\label{eq:ex102}
\delta  F(\mathcal Z_{k_1}(x,\cdot)|\mathcal Z_{k_2}(\theta\odot k_1,\cdot))
=(\theta\odot k_2, \theta \odot k_1)_{C_{a,b}'}.
\end{equation}
Also using \eqref{eq:Feynman-add400}, it follows that 
\begin{equation}\label{eq:ex103}
\begin{aligned}
E_{x}^{\mathrm{anf}_q}\big[F(\mathcal Z_{k_1}(x,\cdot))\big]
&=E_{x}^{\mathrm{anf}_q}\big[(\theta,\mathcal Z_{k_1}(x,\cdot))^{\sim}\big] \\
&=E_{x}^{\mathrm{anf}_q}\big[(\theta\odot k_1,x)^{\sim}\big]
=(-iq)^{-1/2}(\theta\odot k_1,a)_{C_{a,b}'}.
\end{aligned}
\end{equation} 

\par
Next using equations \eqref{eq:byparts-step3-show-ex}, \eqref{eq:ex102}, and \eqref{eq:ex103},
we obtain the formula
\begin{equation}\label{eq:ex104}
\begin{aligned}
&E_x^{\mathrm{anf}_q}\big[(\theta\odot k_2,x)^{\sim}
(\theta\odot k_1,x))^{\sim}\big]\\
&\equiv  
\int_{C_{a,b}[0,T]}^{\mathrm{anf}_q}(\theta,\mathcal Z_{k_2}(x,\cdot))^{\sim}
(\theta,\mathcal Z_{k_1}(x,\cdot))^{\sim}   d\mu(x)\\
&= E_{x}^{\mathrm{anf}_q}\big[(\theta,\mathcal Z_{k_2}(x,\cdot))^{\sim}
F(\mathcal Z_{k_1}(x,\cdot))\big]\\
&=\frac{i}{q}E_{x}^{\mathrm{anf}_q}\big[ \delta F( \mathcal Z_{k_1} (x,\cdot)
| \mathcal Z_{k_2}(\theta \odot k_1,\cdot))\big]\\
&\quad+(-iq)^{-1/2}
(\theta\odot k_2,a)_{C_{a,b}'}E_{x}^{\mathrm{anf}_q}\big[F(\mathcal Z_{k_1}(x,\cdot))\big]\\
&=\frac{i}{q}(\theta\odot k_2, \theta \odot k_1)_{C_{a,b}'}
+\frac{i}{q}(\theta\odot k_2,a)_{C_{a,b}'}(\theta \odot k_1,a)_{C_{a,b}'}.
\end{aligned}
\end{equation}
\end{example}

\par
In our next example, for any positive integer $m\in \{3,4,\ldots\}$, we obtain a recurrence
relation for the generalized analytic Feynman integral
\[
E_x^{\mathrm{anf}_q}\bigg[\prod_{j=1}^m(\theta\odot k_j,x)^{\sim}\bigg].
\]

\begin{example}
For a positive integer $m \ge 3$,
let  $\{k_1, \ldots,k_{m-1}, k_m\}$ be a finite  set  
of functions in  $\mathrm{Supp}_{C_{a,b}^*}[0,T]$,
and given a function $\theta\in C_{a,b}'[0,T]$, set 
\[
\begin{aligned}
F(x)
=\prod_{j=1}^{m-1}(\theta\odot k_j ,x)^{\sim}
=\prod_{j=1}^{m-1}(\theta\odot k_j ,\mathcal Z_b(x,\cdot))^{\sim}.
\end{aligned}
\]
First, using equation \eqref{eq:1st}, it follows that for all $w\in C_{a,b}'[0,T]$,
\[
\begin{aligned}
\delta F(&x|\mathcal Z_{k_m}(w,\cdot))
=\delta F(\mathcal Z_{b}(x,\cdot)|\mathcal Z_{k_m}(w,\cdot))\\
&=\frac{\partial}{\partial \alpha} 
\prod_{j=1}^{m-1}\big\{(\theta\odot k_j ,\mathcal Z_{b}(x,\cdot) )^{\sim} 
+ \alpha(\theta\odot k_j ,  \mathcal Z_{k_m}(w,\cdot))^{\sim}\big\} \bigg|_{\alpha=0}\\
&=\sum_{l=1}^{m-1}\bigg(\prod\limits_{\begin{subarray}{1}j=1\\j\ne l\end{subarray}}^{m-1}
(\theta\odot k_j ,\mathcal Z_{b}(x,\cdot) )^{\sim}\bigg) 
(\theta\odot k_l ,\mathcal Z_{k_m}(w,\cdot) )^{\sim}. 
\end{aligned}
\]
Then, in particular, it follows that
\begin{equation}\label{eq:ex105}
\begin{aligned}
 \delta F(&\mathcal Z_{b}(x,\cdot) |\mathcal Z_{k_m}(\theta \odot b,\cdot))   
 =\delta F(\mathcal Z_{b}(x,\cdot)|\mathcal Z_{k_m}(\theta,\cdot))\\
&=\sum_{l=1}^{m-1} \bigg(\prod\limits_{\begin{subarray}{1}j=1\\j\ne l\end{subarray}}^{m-1}
(\theta\odot k_j ,\mathcal Z_{b}(x,\cdot) )^{\sim} \bigg)
(\theta\odot k_l ,\mathcal Z_{k_m}(\theta,\cdot) )^{\sim} \\
&=\sum_{l=1}^{m-1}\bigg(\prod\limits_{\begin{subarray}{1}j=1\\j\ne l\end{subarray}}^{m-1}
(\theta\odot k_j ,\mathcal Z_{b}(x,\cdot) )^{\sim} \bigg)
(\theta\odot k_l, \theta\odot k_m )_{C_{a,b}'}. 
\end{aligned}
\end{equation}

\par
Next, using   \eqref{eq:byparts-step3-show-ex} with $k_1$ and $k_2$ 
replaced with $b$ and  $k_m$, respectively, and with 
$F(x)=\prod_{j=1}^{m-1}(\theta\odot k_j,x)^{\sim}$,
\eqref{eq:ex105}, and   the equation $\mathcal Z_{b}(x,\cdot)=x$, 
it follows that 
\begin{equation}\label{eq:ex106}
\begin{aligned}
& E_x^{\mathrm{anf}_q}\bigg[\prod_{j=1}^m(\theta\odot k_j,x)^{\sim}\bigg]\\
&=E_{x}^{\mathrm{anf}_q}\bigg[
(\theta,\mathcal Z_{k_m}(x,\cdot))^{\sim}
\prod_{j=1}^{m-1}(\theta\odot k_j,\mathcal Z_{b}(x,\cdot))^{\sim}\bigg]\\
&= E_{x}^{\mathrm{anf}_q}\big[(\theta,\mathcal Z_{k_m}(x,\cdot))^{\sim}
F(\mathcal Z_b(x,\cdot))\big]\\
&=\frac{i}{q}E_{x}^{\mathrm{anf}_q}\big[ 
\delta F( \mathcal Z_{b} (x,\cdot)| \mathcal Z_{k_m}(\theta\odot b,\cdot))\big]\\
&\quad+(-iq)^{-1/2}
(\theta\odot k_m,a)_{C_{a,b}'}E_{x}^{\mathrm{anf}_q}\big[
F( \mathcal Z_{b}(x,\cdot))\big]\\
&=\frac{i}{q}\sum_{l=1}^{m-1} (\theta\odot k_l, \theta\odot k_m )_{C_{a,b}'}
E_x^{\mathrm{anf}_q}\bigg[\prod\limits_{\begin{subarray}{1}j=1\\j\ne l\end{subarray}}^{m-1}
(\theta\odot k_j,x)^{\sim}\bigg]\\
&\quad+(-iq)^{-1/2}
(\theta \odot k_m,a)_{C_{a,b}'}E_x^{\mathrm{anf}_q}\bigg[
\prod_{j=1}^{m-1}(\theta\odot k_j,x)^{\sim}\bigg].
\end{aligned}
\end{equation}
\end{example}

\begin{remark}
Letting $m=3$ in equation \eqref{eq:ex106} and applying
equations \eqref{eq:ex103}  and \eqref{eq:ex104} allows us to 
easily and completely calculate the 
generalized Feynman integral 
\[
\begin{aligned}
&E_x^{\mathrm{anf}_q}\big[
(\theta\odot k_1,x)^{\sim}
(\theta\odot k_2x)^{\sim}
(\theta\odot k_3,x)^{\sim}\big]\\
&=E_x^{\mathrm{anf}_q}
\big[
(\theta,\mathcal Z_{k_1}(x,\cdot))^{\sim}
(\theta,\mathcal Z_{k_2}(x,\cdot))^{\sim}
(\theta,\mathcal Z_{k_3}(x,\cdot))^{\sim} \big].
\end{aligned}
\]
Then setting $m=4$ in equation  \eqref{eq:ex106} allows us 
to completely evaluate the  generalized Feynman integral 
\[
\begin{aligned}
&E_x^{\mathrm{anf}_q}
\big[
(\theta\odot k_1,x)^{\sim}
(\theta\odot k_2x)^{\sim}
(\theta\odot k_3,x)^{\sim}
(\theta\odot k_4,x)^{\sim} \big]\\
&=E_x^{\mathrm{anf}_q}
\big[
(\theta,\mathcal Z_{k_1}(x,\cdot))^{\sim}
(\theta,\mathcal Z_{k_2}(x,\cdot))^{\sim}
(\theta,\mathcal Z_{k_3}(x,\cdot))^{\sim}
(\theta,\mathcal Z_{k_4}(x,\cdot))^{\sim} \big],
\end{aligned}
\]
since we already have complete evaluation formulas for 
\[
E_x^{\mathrm{anf}_q}\bigg[
\prod_{j=1}^{l}(\theta\odot k_j,x)^{\sim}\bigg],\,\, l=1,2, 3. 
\]
Then we can evaluate 
\[
E_x^{\mathrm{anf}_q}\bigg[\prod_{j=1}^{5}(\theta\odot k_j ,x)^{\sim}\bigg], 
\]
since we  have already   evaluated  
\[
E_x^{\mathrm{anf}_q}\bigg[
\prod_{j=1}^{l}(\theta\odot k_j ,x)^{\sim}\bigg]
\]
for  $l=1,2,3$ and $4$; etc.
\end{remark}


 \end{document}